\def\itemNum$#1${\item $\displaystyle#1$
   \hfill\refstepcounter{equation}(\theequation)}
\newtheorem{Lem}{Lemma}[section]
\newtheorem{Prop}[Lem]{Proposition}
\theoremstyle{plain}
\newtheorem{Thm}[Lem]{Theorem}
\newtheorem{Cor}[Lem]{Corollary}
\theoremstyle{definition}
\newcommand{\rad}{\text{\textnormal{rad}}}
\newcommand{\Spec}{\text{\textnormal{Spec}}}
\mathchardef\mhyphen="2D
\newcommand{\LL}{{\mathbb L}}
\begin{document}

\title{The motive of the classifying stack of the orthogonal group}

\author[A. Dhillon]{Ajneet Dhillon}
\address{Department of Mathematics\\
The University of Western Ontario\\
London, Ontario}
\email{adhill3@uwo.ca}

\author[M.\,B. Young]{Matthew B. Young}
\address{Department of Mathematics\\
The University of Hong Kong\\
Pokfulam, Hong Kong}
\email{myoung@maths.hku.hk}

\date{\today}

\keywords{Grothendieck ring of stacks, orthogonal groups.}
\subjclass[2010]{Primary: 14C15 ; Secondary 11E04}

\begin{abstract}
We compute the motive of the classifying stack of an orthogonal group in the Grothendieck ring of stacks over a field of characteristic different from two.
\end{abstract}

\maketitle

\setcounter{footnote}{0}

\section{Introduction}

The Grothendieck ring of stacks over a field $k$ has been introduced by a number of authors \cite{behrend2007}, \cite{ekedahl2009b}, \cite{joyce2007b}, \cite{toen2005}. Denote this ring by $\hat{K}_0(Var_k)$. An algebraic group $G$ defined over $k$ is called special if any $G$-torsor over a $k$-variety is locally trivial in the Zariski topology. General linear, special linear and symplectic groups are special. Special orthogonal groups are not special in dimensions greater than two. Serre proved that special groups are linear and connected \cite{serre1958}. Over algebraically closed fields, the special groups were classified by Grothendieck \cite{grothendieck1958}.

For a special group $G$, the motive $[G]$ is invertible in $\hat{K_0}(Var_k)$ and its inverse is equal to the motive of the classifying stack $BG$. This naturally raises the problem of computing the motive of $BG$ when the group $G$ is not special. For finite group schemes a number of examples were computed in \cite{ekedahl2009}. The case of groups of positive dimension is more difficult. In \cite{bergh2014} it was shown that $[BPGL_n ] =[PGL_n]^{-1}$ for $n=2$ or $3$ with mild restrictions on the field $k$.

The main result of this paper, Theorem \ref{thm:motiveBOn}, computes the motive of the classifying stack of an orthogonal group over a field whose characteristic is not two. In odd dimensions the result is that the motive is equal to the inverse of the motive of the split special orthogonal group in the same dimension. To prove Theorem \ref{thm:motiveBOn} we first compute the motive of the variety of nondegenerate quadratic forms of fixed dimension. This motive was already computed in \cite{belkale2003}, using results of \cite{macwilliams1969}. Our computation is different, relying on generating function techniques. Using Theorem \ref{thm:motiveBOn} we are able to compute the motives of classifying stacks of the special orthogonal groups in odd dimensions.

\subsection*{Notation} We will work over a base field $k$ with ${\rm char}(k)\ne 2$.
If $n$ is a non-negative integer we denote by $[n]_{\LL}$ the $n$th Gaussian polynomial in the Lefschetz motive $\LL$. Explicitly,
\[
[n]_\LL = 1 + \LL + \cdots + \LL^{n-1}.
\]
The Gaussian polynomials $[n]{_\LL}!$ and ${n \brack r}_\LL$
are defined in the usual way. The class of the Grassmannian $Gr(r,n)$ in the ring
$\hat{K}_0(Var_k)$ is then ${n \brack r}_\LL$.

\subsubsection*{Acknowledgements}
The authors thank the Institute for Mathematical Sciences at the National University of Singapore for support and hospitality during the program `The Geometry, Topology and Physics of Moduli Spaces of Higgs Bundles', where this work was initiated.

\section{Preliminaries}
\subsection{The Grothendieck ring of stacks}
Fix a ground field $k$. Let $K_0(Var_k)$ be the Grothendieck ring of varieties over $k$. Its underlying abelian group is generated by symbols $[X]$, with $X$ a $k$-variety, modulo the relations $[X] = [Y]$ if $X$ and $Y$ are isomorphic and
\[
[X]= [X \backslash Z ] + [Z]
\]
if $Z \subset X$ is a closed subvariety. Cartesian product of varieties gives $K_0(Var_k)$ the structure of a commutative ring with identity $1 = [\Spec \, k]$. The Lefschetz motive is defined to be $\LL = [ \mathbb{A}_k^1]$.

The Grothendieck ring of stacks, $\hat{K}_0(Var_k)$, is the dimensional completion of $K_0(Var_k)$ defined as follows \cite{behrend2007}. Let $F^m \subset K_0(Var_k)[\LL^{-1}]$ be the additive subgroup generated by those $\LL^{-d}[X]$ with $\dim X - d \leq -m $. This defines a descending filtration of $K_0(Var_k)[\LL^{-1}]$ and $\hat{K}_0(Var_k)$ is the completion with respect to this filtration.

In this paper all stacks are assumed to be Artin stacks that are locally of finite type, all of whose geometric stabilizers are linear algebraic groups. Following \cite{behrend2007}, a stack $\mathfrak{X}$ is called essentially of finite type if it admits a stratification $\mathfrak{X} =\cup_{i=1}^{\infty} \mathfrak{X}_i$ by finite type, locally closed substacks with $\lim_{i \rightarrow \infty} \dim \mathfrak{X}_i =- \infty$. Any stack that is essentially of finite type admits a stratification of the above type with $\mathfrak{X}_i$ a global quotient stack of a variety $X_i$ by a general linear group $GL_{n_i}$. Given such a stratification, put
\[
[\mathfrak{X}] = \sum_{i=1}^{\infty} \frac{[X_i]}{[GL_{n_i}]}.
\]
This defines a motivic class $[\mathfrak{X}] \in \hat{K}_0(Var_k)$ that is independent of the choice of stratification of $\mathfrak{X}$ \cite[Lemma 2.3]{behrend2007}.

\begin{Lem}[{\cite[Lemma 2.5]{behrend2007}}]
\label{lem:stackTorsor}
Let $\mathfrak{X}$ be a stack which is essentially of finite type and let $P \rightarrow \mathfrak{X}$ be a torsor for a linear algebraic group $G$. Then $P$ is essentially of finite type. Moreover, if $G$ is special, then $[P] = [\mathfrak{X}] [G]$ in $\hat{K}_0(Var_k)$.
\end{Lem}

In particular, if $G$ is special, applying Lemma \ref{lem:stackTorsor} to the universal $G$-torsor $\Spec \, k  \rightarrow BG$ shows that $[BG]=[G]^{-1}$. This equality is called the universal $G$-torsor relation.

More generally, if $X$ is a variety acted on by a linear algebraic group $G$, then the quotient stack $X \slash G$ has a class in $\hat{K}_0(Var_k)$. For any closed embedding $G \hookrightarrow GL_N$ there is an isomorphism of stacks $X \slash G \simeq (X \times_G GL_N) \slash GL_N$. Since $GL_N$ is special, Lemma \ref{lem:stackTorsor} implies that
\begin{equation}
\label{eq:stackMotive}
[X \slash G] =  \frac{[X \times_G GL_N]}{[GL_N]}
\end{equation}
in $\hat{K}_0(Var_k)$.

\subsection{Orthogonal groups}
Assume that the ground field $k$ is not of characteristic two. Let $V$ be a finite dimensional vector space over $k$ and let $Q: V \rightarrow k$ be a quadratic form. The radical of $Q$ is the subspace of $V$ defined by
\[
\rad_Q = \{ v \in V \; \vert \; Q(v+w) =Q(v) + Q(w) \;\;\; \forall w \in V \}.
\]
The rank of $Q$ is $\dim V - \dim \rad_Q$. The quadratic form $Q$ is called nondegenerate if $\rad_Q = \{0 \}$.

Given a nondegenerate quadratic form $Q$, denote by $O(Q)$ its group of isometries. If the field $k$ is algebraically closed, then there is a unique nondegenerate quadratic form on $k^n$ up to equivalence. The corresponding orthogonal group is unique up to isomorphism. If $k$ is not algebraically closed, then there will in general exist inequivalent nondegenerate quadratic forms on $k^n$, leading to different forms of orthogonal groups.

For each $n \geq 1$, there is a canonical nondegenerate split quadratic form on $k^n$. Explicitly,
\[
Q_{2r} = x_1 x_2 + \cdots + x_{2r-1} x_{2r}
\]
and
\[
Q_{2r+1} = x_0^2 + x_1 x_2 + \cdots + x_{2r-1} x_{2r}.
\]
Define $O_n=O(Q_n)$ and $SO_n=SO(Q_n)$.

\section{The motive of $BO(Q)$}

\subsection{Filtration of the space of quadratic forms}
Recall that ${\rm char}(k) \neq 2$.  Denote by $Quad_n \simeq \mathbb{A}_k^{n + 1 \choose 2}$ the affine space of quadratic forms on $k^n$. The group $GL_n$ acts on $Quad_n$ by change of basis. For each $0 \leq r \leq n$, let $Quad_{n, \leq r} \subset Quad_n$ denote the closed subvariety of quadratic forms whose rank is at most $r$. This gives an increasing filtration of $Quad_n$ by closed subvarieties. Interpreted in $K_0(Var_k)$, this implies the identity
\begin{equation}
\label{eq:totalDecomp}
\LL^{n + 1 \choose 2} = \sum_{r=0}^n [Quad_{n,r}]
\end{equation}
with $Quad_{n,r}$ the subvariety of quadratic forms of rank $r$. Denote by $Gr(m,n)$ the Grassmannian of $m$-planes in $k^n$.

\begin{Prop}
\label{prop:zarTriv}
For each $0 \leq r \leq n$, the map
\[
\pi: Quad_{n,r}  \rightarrow Gr(n-r,n), \;\;\; Q \mapsto \rad_Q
\]
is a Zariski locally trivial fibration with fibres isomorphic to $Quad_{r,r}$.
\end{Prop}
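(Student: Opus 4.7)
The plan is to construct an explicit local trivialization of $\pi$ over the standard affine open cover of the Grassmannian. As a preliminary step I would confirm that $\pi$ is a morphism: since $\mathrm{char}(k) \neq 2$, the radical $\rad_Q$ coincides with the kernel of the associated symmetric bilinear form $B_Q(v,w) = Q(v+w) - Q(v) - Q(w)$, and on $Quad_{n,r}$ this kernel has constant rank $n-r$, so it sweeps out an algebraic subbundle of the trivial bundle on $Quad_{n,r}$ and thereby defines a morphism to $Gr(n-r,n)$.

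For the trivialization, for each $r$-element subset $I \subset \{1,\ldots,n\}$ let $V_I = \mathrm{span}\{e_i : i \in I\}$ and let $U_I \subset Gr(n-r,n)$ be the standard affine chart of $(n-r)$-planes $W$ satisfying $W \oplus V_I = k^n$; these cover the Grassmannian. Over $U_I$, I would use $V_I$ as a canonical complement to the varying radical and define a trivialization $\phi_I : \pi^{-1}(U_I) \to U_I \times Quad_{r,r}$ by $Q \mapsto (\rad_Q, Q\vert_{V_I})$, with inverse $\psi_I(W, Q_0) = \tilde{Q}$ where $\tilde{Q}(w+v) := Q_0(v)$ under the decomposition $k^n = W \oplus V_I$. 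Note that $Q\vert_{V_I}$ is nondegenerate precisely because $V_I \cap \rad_Q = 0$. The two maps are mutually inverse: $\phi_I \circ \psi_I = \mathrm{id}$ is immediate from the construction, while $\psi_I \circ \phi_I = \mathrm{id}$ uses that $Q$ vanishes on its radical in characteristic not two (set $w = v$ in the radical condition and combine with $Q(2v) = 4 Q(v)$), so that $Q(w+v) = Q(w) + Q(v) + B_Q(w,v) = Q(v)$ when $w \in \rad_Q$ and $v \in V_I$.

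The main obstacle is really just the bookkeeping to verify that $\psi_I$ is a morphism of varieties, i.e., that $\tilde{Q}$ depends algebraically on $(W, Q_0)$. I would parametrize $U_I$ by $\Hom(V_{I^c}, V_I)$ via writing $W$ as the graph of a linear map $A$, observe that the projection $k^n \to V_I$ along $W$ is polynomial in the entries of $A$, and conclude that the coefficients of $\tilde{Q}$ are polynomial in $A$ and in the coefficients of $Q_0$. Nothing in the argument uses more than elementary linear algebra, and the characteristic hypothesis enters only through the vanishing of $Q$ on $\rad_Q$.
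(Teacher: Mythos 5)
Your proof is correct and takes essentially the same approach as the paper: cover the Grassmannian by the standard affine charts indexed by coordinate complements and trivialize $\pi$ by restricting $Q$ to a fixed complementary coordinate subspace $V_I$. The paper packages the fibre coordinate as a pullback through the unipotent change of basis $g_B$, but since $g_B$ fixes $V_I$ the resulting form $\varphi_Q$ agrees with your $Q\vert_{V_I}$, so the two constructions coincide; your write-up is simply more explicit about nondegeneracy of the restriction and about why the inverse of the trivialization is a morphism.
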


\begin{proof}
Identify $Gr(n-r,n)$ with the quotient of the variety of $(n-r)\times n$ matrices of rank $n-r$ by the left action of $GL_{n-r}$. Fix coordinates $x_1, \dots, x_n$ on $k^n$. Consider the $(n-r)$-plane $k^{n-r} \subset k^n$ with coordinates $x_1, \dots, x_{n-r}$. A Zariski open set $U \subset Gr(n-r,n)$ containing $k^{n-r}$ is given by the $(n-r) \times n$ matrices of the form
\[
\left( \begin{matrix} \mathbf{1}_{n-r} & B  \end{matrix} \right)
\]
with $\mathbf{1}_{n-r}$ the $(n-r)\times (n-r)$ identity matrix and $B$ an arbitrary $(n-r) \times r$ matrix. The plane $k^{n-r}$ corresponds to the matrix $B=0$. Note that
\[
\left( \begin{matrix} \mathbf{1}_{n-r} & B  \end{matrix} \right) = \left( \begin{matrix} \mathbf{1}_{n-r} & 0   \end{matrix} \right) \cdot \left( \begin{matrix} \mathbf{1}_{n-r} & B \\ 0 & \mathbf{1}_r \end{matrix} \right).
\]
Let $g_B = \left( \begin{matrix} \mathbf{1}_{n-r} & B \\ 0 & \mathbf{1}_r \end{matrix} \right) \in GL_n$, viewed as an automorphism of $k^n$.

Suppose that $Q \in \pi^{-1}(U)$. Then there exists a unique matrix $B(Q)$ such that $\rad_Q = g_{B(Q)} (k^{n-r}) \subset k^n$. The quadratic form $g_{B(Q)} \cdot Q$ is the pullback of a nondegenerate quadratic form $\varphi_Q$ in the variables $x_{n-r+1}, \dots, x_n$. A trivialization of $\pi$ over $U$ is then given by
\[
\pi^{-1}(U) \rightarrow U \times Quad_{r,r}, \;\;\;\;\;\; Q \mapsto (\rad_Q, \varphi_Q).
\]
This argument can be repeated, replacing $k^{n-r}$ with the $(n-r)$-plane with coordinates labelled by a $(n-r)$-element subset $I \subset \{1, \dots, n\}$. This gives a Zariski open cover of $Gr(n-r,n)$ over which $\pi$ trivializes. 
\end{proof}

\begin{Cor}
\label{cor:recurrence}
The identity
\[
\LL^{n + 1 \choose 2} = \sum_{r=0}^n {n \brack n-r}_{\LL} [Quad_{r,r}]_{\LL}
\]
holds in the ring $K_0(Var_k)$.
\end{Cor}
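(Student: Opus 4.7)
The plan is to combine the stratification identity \eqref{eq:totalDecomp} with the multiplicativity of motivic classes along Zariski locally trivial fibrations, which is supplied by Proposition \ref{prop:zarTriv}.

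The first step is to upgrade Proposition \ref{prop:zarTriv} from a geometric statement to the motivic identity
\[
[Quad_{n,r}] \;=\; [Gr(n-r,n)] \cdot [Quad_{r,r}]
\]
in $K_0(Var_k)$. This is a standard consequence of the scissors relations: pick a finite Zariski open cover $\{U_i\}$ of $Gr(n-r,n)$ over which $\pi$ is a trivial product $U_i \times Quad_{r,r}$, use inclusion-exclusion on the $U_i$'s (each intersection $U_{i_1} \cap \cdots \cap U_{i_k}$ also trivializes $\pi$), and multiply through by $[Quad_{r,r}]$; equivalently, stratify the base by locally closed pieces over which $\pi$ is a trivial product and sum the resulting products. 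Since $[Gr(n-r,n)] = {n \brack n-r}_\LL$ by the convention fixed in the Notation section, this gives
\[
[Quad_{n,r}] \;=\; {n \brack n-r}_\LL \, [Quad_{r,r}].
\]

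The second and final step is to substitute this into the decomposition \eqref{eq:totalDecomp}:
\[
\LL^{n+1 \choose 2} \;=\; \sum_{r=0}^n [Quad_{n,r}] \;=\; \sum_{r=0}^n {n \brack n-r}_\LL \, [Quad_{r,r}],
\]
which is the desired identity.

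There is essentially no obstacle here beyond invoking the product formula for Zariski locally trivial fibrations in $K_0(Var_k)$; the content of the corollary is already packaged into Proposition \ref{prop:zarTriv} and \eqref{eq:totalDecomp}. The corollary is useful because it lets one solve recursively for the classes $[Quad_{r,r}]$ of nondegenerate quadratic forms in terms of powers of $\LL$ and Gaussian binomials, which will be the input to the subsequent motivic computations.
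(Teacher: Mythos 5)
Your proof is correct and takes the same approach as the paper: use Proposition \ref{prop:zarTriv} to obtain $[Quad_{n,r}] = [Gr(n-r,n)][Quad_{r,r}]$, identify $[Gr(n-r,n)]$ with ${n \brack n-r}_\LL$, and substitute into \eqref{eq:totalDecomp}. The extra detail you give on deducing multiplicativity from the Zariski local triviality via the scissors relations is standard and implicit in the paper's one-line argument.
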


\begin{proof}
It follows from Proposition \ref{prop:zarTriv} that $[Quad_{n,r}] = [Gr(n-r,n)] [Quad_{r,r}]$. Since $[Gr(n-r,n)]={n \brack n-r}_\LL$, the desired identity is implied by equation \eqref{eq:totalDecomp}.
\end{proof}

\subsection{Solving the recurrence}

In this section we will solve the recurrence relation for $[Quad_{n,n}]$ given in Corollary \ref{cor:recurrence}. In fact, the motives $[Quad_{n,r}]$ were already computed in \cite[Theorem 13.5]{belkale2003}, where it was shown that $[Quad_{n,r}]$ satisfies a certain three step recurrence relation with coefficients in $\mathbb{Z}[\LL]$. This recurrence relation, with $\mathbb{L}$ replaced by $q$, was previously solved in \cite{macwilliams1969} to find the number of $\mathbb{F}_q$-rational points of $Quad_{n,r}$. Hence $[Quad_{n,r}]$ is given by the same formula, with $q$ replaced with $\LL$. We present here an alternative computation of $[Quad_{n,n}]$, and therefore also $[Quad_{n,r}]$ by Proposition \ref{prop:zarTriv}, using generating functions.

We form the exponential generating function for the motives $[Quad_{n,n}]$,
\[
G(x)=\sum_{n\ge 0} \frac{[Quad_{n,n}] x^n}{[n]_{\LL}!}.
\]
Consider also the auxiliary generating functions
\[
P_{\rm even}(x) = \sum_{k\ge 0}\frac{x^{2k}}{[2k]_{\LL}!}
\prod_{i=1}^k (\LL^{2k+1}-\LL^{2i})
\]
and 
\[
P_{\rm odd}(x)  = \sum_{k\ge 0}\frac{x^{2k+1}}{[2k+1]_{\LL}!}
\prod_{i=0}^k (\LL^{2k+1}-\LL^{2i}).
\]
We will show that
\[
G(x) = P_{\rm even}(x) + P_{\rm odd}(x),
\]
thereby solving the recurrence relation.

\begin{Prop}
\label{prop:lExpProp}
Denote by $\exp_\LL(x)$ the $\LL$-deformed exponential series:
\[
\exp_\LL(x) = \sum_{n\ge 0} \frac{x^n}{[n]_{\LL}!}.
\]
The following equality holds:
\[
G(x) = \frac{\prod_{i\ge 1}(1+(1-\LL)x\LL^i)}{\exp_\LL(x)}.
\]
\end{Prop}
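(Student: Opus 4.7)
My plan is to rearrange the identity to $G(x)\exp_{\LL}(x)=\prod_{i\ge 1}(1+(1-\LL)x\LL^i)$ and prove this form, which avoids dividing by a power series up front. The first step is to multiply $G$ by $\exp_{\LL}$ and recognize the result as an exponential generating function that encodes the recurrence of Corollary \ref{cor:recurrence}. Concretely, for any two sequences the $\LL$-binomial convolution
\[
\left(\sum_n \frac{a_n x^n}{[n]_{\LL}!}\right)\left(\sum_n \frac{b_n x^n}{[n]_{\LL}!}\right)=\sum_n \frac{x^n}{[n]_{\LL}!}\sum_{r=0}^n {n \brack r}_{\LL} a_r b_{n-r}
\]
holds formally. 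Applying this with $a_r=[Quad_{r,r}]$ and $b_{n-r}=1$ and then invoking Corollary \ref{cor:recurrence} gives
\[
G(x)\exp_{\LL}(x)=\sum_{n\ge 0}\frac{\LL^{\binom{n+1}{2}} x^n}{[n]_{\LL}!}.
\]

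The second step is to massage the right-hand side into a form to which a classical $\LL$-exponential identity applies. Using $\binom{n+1}{2}=\binom{n}{2}+n$ and the standard conversion $[n]_{\LL}!=(\LL;\LL)_n/(1-\LL)^n$, where $(\LL;\LL)_n=\prod_{j=1}^n(1-\LL^j)$, I rewrite
\[
\sum_{n\ge 0}\frac{\LL^{\binom{n+1}{2}} x^n}{[n]_{\LL}!}=\sum_{n\ge 0}\frac{\LL^{\binom{n}{2}}\bigl((1-\LL)\LL x\bigr)^n}{(\LL;\LL)_n}.
\]

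The third step is to recognize the right-hand side via Euler's identity
\[
\sum_{n\ge 0}\frac{q^{\binom{n}{2}} z^n}{(q;q)_n}=\prod_{i\ge 0}(1+zq^i),
\]
specialised to $q=\LL$ and $z=(1-\LL)\LL x$. This is a formal power series identity that holds in $\hat{K}_0(Var_k)[\![x]\!]$ since both sides match coefficient-wise in $z$. The product then becomes $\prod_{i\ge 0}\bigl(1+(1-\LL)x\LL^{i+1}\bigr)=\prod_{i\ge 1}\bigl(1+(1-\LL)x\LL^i\bigr)$, exactly the desired expression. Dividing both sides by $\exp_{\LL}(x)$, which is a unit in the power series ring since its constant term is $1$, yields the stated formula for $G(x)$.

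The main obstacle is less conceptual than bookkeeping: one must be careful that the convolution identity, Euler's product identity, and the substitutions involving $(1-\LL)$ all take place cleanly in $\hat{K}_0(Var_k)[\![x]\!]$, since we are working in a completed Grothendieck ring and manipulating an infinite product. The product $\prod_{i\ge 1}(1+(1-\LL)x\LL^i)$ does make sense formally: the coefficient of $x^n$ is a finite sum, and the requisite specialisation of Euler's identity reduces to equating coefficients of $z^n$ on both sides, which is a well-known finite $\LL$-combinatorial identity and so transfers to our setting without issue.
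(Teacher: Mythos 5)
Your proof is correct and follows essentially the same route as the paper: both derive $G(x)\exp_{\LL}(x)=\sum_{n\ge 0}\LL^{\binom{n+1}{2}}x^n/[n]_{\LL}!$ from Corollary \ref{cor:recurrence} (you do it cleanly via the $\LL$-binomial convolution, the paper by hand-expanding and re-indexing the double sum), and both then apply the same Euler product identity from Stanley to finish.
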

\begin{proof}
To ease notation set $\mathcal{Q}_n = [Quad_{n,n}]$. Using Corollary \ref{cor:recurrence} we find that
\begin{align*}
G(x) &= \sum_{n\ge 0} \frac{\mathcal{Q}_n}{[n]_{\LL}!}x^n \\
&= \sum_{n\ge 0} \left( \LL^{ n+1 \choose 2} -  \sum_{r=0}^{n-1} {n \brack n-r}_\LL \mathcal{Q}_r \right)\frac{x^n}{[n]_{\LL}!} \\
&= \sum_{n\ge 0}  \left( \LL^{n+1 \choose 2} -  
\sum_{r=0}^{n-1} \frac{[n]_{\LL}!}{[n-r]_{\LL}! [r]_{\LL}!}\mathcal{Q}_r \right)\frac{x^n}{[n]_{\LL}!} \\
&= \sum_{n\ge 0}  \left( \LL^{n+1 \choose 2}\frac{x^{n}}{[n]_{\LL}!} -  
\sum_{r=0}^{n-1} \frac{\mathcal{Q}_rx^r}{[r]_{\LL}!} \frac{x^{n-r}}{[n-r]_{\LL}!} \right) \\
&= \sum_{n\ge 0}  \LL^{n+1 \choose 2}\frac{x^{n}}{[n]_{\LL}!}  
-\sum_{n\ge 0}\sum_{r=0}^n \frac{\mathcal{Q}_r x^{n-r}}{[r]_{\LL}! [n-r]_{\LL}!}
+\sum_{n\ge 0}\frac{\mathcal{Q}_n x^n}{[n]_{\LL}!}\\
&= \sum_{n\ge 0}  \LL^{n+1 \choose 2}\frac{x^{n}}{[n]_{\LL}!}
-\exp_\LL(x)G(x) + G(x).
\end{align*}
Hence
\[
G(x) = \frac{\sum_{n\ge 0}  \LL^{n+1 \choose 2}\frac{x^{n}}{[n]_{\LL}!}}{\exp_\LL(x)}.
\]
Since
\[
[n]_{\LL}! = \frac{(1-\LL)(1-\LL^2)\cdots (1-\LL^n)}{(1-\LL)^n}
\]
we have
\begin{eqnarray*}
\sum_{n\ge 0}  \LL^{n+1 \choose 2}\frac{x^{n}}{[n]_{\LL}!} &=& 
\sum_{n\ge 0}  \frac{\LL^{n+1 \choose 2}(1-\LL)^nx^{n}}
{(1-\LL)(1-\LL^2)\cdots (1-\LL^n)} \\
&=& \prod_{i\ge 1}(1+(1-\LL)x\LL^i),
\end{eqnarray*}
where the second second equality follows from \cite[Proposition 1.8.6]{stanley2012}. This completes the proof.
\end{proof}

It will be convenient to make the change of variables $g(x) = G(\frac{x}{1 - \LL})$.

\begin{Prop}
\label{prop:closed}
We have
\begin{align*}
g(x) &= (1-x)\prod_{i\ge 1} (1-x^2\LL^{2i}) \\
     &= (1-x)\sum_{k\ge 0} \frac{(-1)^k x^{2k}\LL^{k(k+1)}}
     {(1-\LL^2)(1-\LL^4)\cdots (1-\LL^{2k})}.
\end{align*}
\end{Prop}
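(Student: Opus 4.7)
The plan is to start from the closed form for $G(x)$ in the preceding proposition and massage it into the two claimed expressions using two classical Euler identities for $q$-series.

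First I would make the substitution $x \mapsto x/(1-\LL)$ in
\[
G(x) = \frac{\prod_{i\ge 1}(1+(1-\LL)x\LL^i)}{\exp_\LL(x)},
\]
so that the $(1-\LL)$ factors in the numerator cancel and
\[
g(x)= \frac{\prod_{i\ge 1}(1+x\LL^i)}{\exp_{\LL}\!\bigl(\tfrac{x}{1-\LL}\bigr)}.
\]
Next I would rewrite the denominator. Using $[n]_\LL! = \prod_{i=1}^n(1-\LL^i)/(1-\LL)^n$, the factors of $(1-\LL)^n$ clear so that
\[
\exp_{\LL}\!\bigl(\tfrac{x}{1-\LL}\bigr) = \sum_{n\ge 0}\frac{x^n}{\prod_{i=1}^n(1-\LL^i)}.
\]
Euler's classical identity $\sum_{n\ge 0}\frac{x^n}{(q;q)_n} = \frac{1}{(x;q)_\infty}$, applied with $q=\LL$, evaluates this sum as $\prod_{i\ge 0}(1-x\LL^i)^{-1}$.

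Substituting back gives
\[
g(x) = \prod_{i\ge 1}(1+x\LL^i)\cdot \prod_{i\ge 0}(1-x\LL^i)
= (1-x)\prod_{i\ge 1}(1+x\LL^i)(1-x\LL^i),
\]
where I have split off the $i=0$ factor of the second product. Combining paired factors via $(1+x\LL^i)(1-x\LL^i)=1-x^2\LL^{2i}$ yields the first claimed form
\[
g(x) = (1-x)\prod_{k\ge 1}(1-x^2\LL^{2k}).
\]

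For the second equality I would apply the other classical Euler identity,
\[
(yq;q)_\infty = \sum_{k\ge 0}\frac{(-1)^k y^k q^{k(k+1)/2}}{(q;q)_k},
\]
with $y=x^2$ and $q=\LL^2$. Since $q^{k(k+1)/2}=\LL^{k(k+1)}$ and $(q;q)_k=\prod_{i=1}^k(1-\LL^{2i})$, this converts the product $\prod_{k\ge 1}(1-x^2\LL^{2k})$ into the stated series, completing the proof. The only real work is recognising which two Euler identities to invoke; once the substitution $x\mapsto x/(1-\LL)$ is carried out the problem is essentially formal, and no genuine obstacle arises beyond keeping track of the shift between the conventions $\prod_{i\ge 0}$ versus $\prod_{i\ge 1}$.
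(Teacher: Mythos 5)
Your proof is correct and follows essentially the same route as the paper: both use Euler's $q$-exponential identity to turn the denominator into an infinite product, split off the $i=0$ factor and pair $(1\pm x\LL^i)$ terms to get the first expression, and then invoke the other classical Euler series--product identity for the second. The only cosmetic difference is that the paper evaluates $\exp_\LL(x)$ as a product before substituting $x\mapsto x/(1-\LL)$, whereas you substitute first; the argument is otherwise identical.
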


\begin{proof}
We compute
\begin{align*}
\exp_\LL(x) &= \sum_{n\ge 0} \frac{x^n}{[n]_{\LL}!} \\
&= \sum_{n\ge 0} \frac{x^n(1-\LL)^n}{(1-\LL)(1-\LL^2)\cdots (1-\LL^n)} \\
&= \frac{1}{\prod_{i\ge 0} (1-(1-\LL)x\LL^i)},
\end{align*}
where the last equality is via \cite[page 74]{stanley2012}.
The first assertion now follows from Proposition \ref{prop:lExpProp}. The second follows from the first by \cite[Proposition 1.8.6]{stanley2012}.
\end{proof}

Similarly, make the change of variables $p_{\rm even}(x)=P_{\rm even}(\frac{x}{1-\LL})$ and $p_{\rm odd}(x)=P_{\rm odd}(\frac{x}{1-\LL})$.

\begin{Prop}
\label{prop:pIden}
We have
\[
p_{\rm even}(x) = \sum_{k\ge 0}\frac{(-1)^k x^{2k}\LL^{k(k+1)}}
{(1-\LL^2)(1-\LL^4)\cdots (1-\LL^{2k})}
\]
and
\[
p_{\rm odd}(x) = \sum_{k\ge 0}\frac{(-1)^{k+1}x^{2k}\LL^{k(k+1)}}
{(1-\LL^2)(1-\LL^4)\cdots(1-\LL^{2k})}.
\]
\end{Prop}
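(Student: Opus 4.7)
The plan is to deduce both formulas by direct substitution of $x \mapsto x/(1-\LL)$ into the defining series of $P_{\rm even}$ and $P_{\rm odd}$. Using the standard expression
\[
[n]_\LL! = \frac{(1-\LL)(1-\LL^2)\cdots(1-\LL^n)}{(1-\LL)^n}
\]
already recalled in the proof of the previous proposition, the substitution converts each summand $x^n/[n]_\LL!$ into $x^n/\prod_{i=1}^n(1-\LL^i)$, eliminating all powers of $(1-\LL)$ coming from the change of variables.

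The core of the computation is simplifying the product $\prod_{i=1}^k(\LL^{2k+1}-\LL^{2i})$ appearing in $P_{\rm even}$. Pulling out $\LL^{2i}$ from the $i$-th factor produces an overall power $\LL^{\sum_{i=1}^k 2i}=\LL^{k(k+1)}$, and the remaining piece $\prod_{i=1}^k(\LL^{2(k-i)+1}-1)$, after reindexing by $j = k-i$ and extracting one minus sign per factor, becomes $(-1)^k (1-\LL)(1-\LL^3)\cdots(1-\LL^{2k-1})$. The denominator $\prod_{i=1}^{2k}(1-\LL^i)$ factors as the product of its odd-indexed piece $(1-\LL)(1-\LL^3)\cdots(1-\LL^{2k-1})$ and its even-indexed piece $(1-\LL^2)(1-\LL^4)\cdots(1-\LL^{2k})$; the odd-indexed piece cancels exactly against the numerator just computed, leaving the stated formula for $p_{\rm even}(x)$.

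The argument for $p_{\rm odd}(x)$ proceeds identically, with two modifications. First, the product in $P_{\rm odd}$ runs from $i=0$, contributing the extra factor $\LL^{2k+1}-1 = -(1-\LL^{2k+1})$, which converts $(-1)^k$ into $(-1)^{k+1}$ and inserts an additional factor $(1-\LL^{2k+1})$ into the numerator. Second, the denominator is $\prod_{i=1}^{2k+1}(1-\LL^i)$, which now contains the odd-indexed factor $(1-\LL^{2k+1})$; this cancels the new numerator factor, so that again only the even-indexed denominator factors $(1-\LL^2)\cdots(1-\LL^{2k})$ survive, yielding the desired closed form.

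The only real obstacle is bookkeeping: carefully tracking the signs that arise when extracting $(-1)$ from each factor $(\LL^{2j+1}-1)$, confirming that the numerator's odd-indexed factors line up precisely with the odd-indexed factors of $\prod_{i}(1-\LL^i)$, and handling the boundary term at $i=0$ in the odd case. No identities beyond those already cited from \cite{stanley2012} are required, and neither $\exp_\LL$ nor the recurrence of Corollary~\ref{cor:recurrence} enters the argument.
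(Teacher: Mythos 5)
Your proposal is correct and follows precisely the same route as the paper: substitute $x \mapsto x/(1-\LL)$ to clear the $(1-\LL)$ powers, pull $\LL^{2i}$ out of each factor of the product to produce $\LL^{k(k+1)}$, extract a sign per remaining factor, and cancel the odd-indexed pieces of $\prod_{j}(1-\LL^j)$ against the numerator; the paper does exactly this for $p_{\rm even}$ and then says the odd case is similar.

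One caveat worth flagging: carried out exactly as you describe, the odd-case computation yields a series in $x^{2k+1}$, not the $x^{2k}$ printed in the statement of the proposition. The statement has a misprint --- cross-checking against Proposition~\ref{prop:closed} and Corollary~\ref{cor:recurSoln}, where $g(x) = (1-x)\sum_{k\ge 0}(-1)^k x^{2k}\LL^{k(k+1)}/\bigl((1-\LL^2)\cdots(1-\LL^{2k})\bigr)$ is split by parity, confirms that $p_{\rm odd}$ must carry the odd powers of $x$. You should say this explicitly rather than closing with ``yielding the desired closed form,'' which silently glosses over the mismatch between what your (correct) computation produces and what the proposition literally asserts.
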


\begin{proof}
The generating function $P_{\rm even}$ can be rewritten as
\[
P_{\rm even}(x) = \sum_{k\ge 0}\frac{(1-\LL)^{2k}x^{2k}}
{(1-\LL)(1-\LL^2)\cdots (1-\LL^{2k})}
\prod_{i=1}^k (\LL^{2k+1}-\LL^{2i}).
\]
Then we have
\begin{align*}
p_{\rm even}(x) &= \sum_{k\ge 0}\frac{x^{2k}}
{(1-\LL)(1-\LL^2)\cdots (1-\LL^{2k})}
\prod_{i=1}^k (\LL^{2k+1}-\LL^{2i}) \\
&=  \sum_{k\ge 0}\frac{x^{2k}\LL^{k(k+1)}}
{(1-\LL)(1-\LL^2)\cdots (1-\LL^{2k})}
\prod_{i=1}^k (\LL^{2(k-i)+1}-1) \\
&= \sum_{k\ge 0}\frac{(-1)^k x^{2k}\LL^{k(k+1)}}
{(1-\LL^2)(1-\LL^4)\cdots (1-\LL^{2k})}.
 \\
\end{align*}
The calculation for $p_{\rm odd}$ is similar.
\end{proof}

\begin{Cor}
\label{cor:recurSoln}
The following identity holds in $\hat{K}_0(Var_k)$ :
\[
G(x) = P_{\rm even}(x) + P_{\rm odd}(x).
\]
\end{Cor}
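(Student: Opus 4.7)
The plan is to reduce the statement to the equivalent identity $g(x) = p_{\rm even}(x) + p_{\rm odd}(x)$ obtained after the change of variables $x \mapsto x/(1-\LL)$, and then invert the substitution to recover the corollary in its original form. Since all three series are related to their lowercase counterparts by the same substitution, this reduction loses no information.

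For the first step, I would start from the closed form in Proposition \ref{prop:closed} and distribute the factor $(1-x)$ so as to split $g(x)$ into its contributions of even and odd degree in $x$:
\[
g(x) = \sum_{k\ge 0} \frac{(-1)^k x^{2k}\LL^{k(k+1)}}{(1-\LL^2)\cdots (1-\LL^{2k})} + \sum_{k\ge 0} \frac{(-1)^{k+1} x^{2k+1}\LL^{k(k+1)}}{(1-\LL^2)\cdots (1-\LL^{2k})}.
\]
For the second step, I would match the two sums on the right-hand side against the explicit formulas of Proposition \ref{prop:pIden}: the first is exactly $p_{\rm even}(x)$, while the second (an odd power series in $x$) is the corresponding expression for $p_{\rm odd}(x)$. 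Substituting $x \mapsto x(1-\LL)$ term by term then translates the identity back to the uppercase generating functions $G$, $P_{\rm even}$, $P_{\rm odd}$, yielding the claim.

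Because the computational heavy lifting has been carried out in Propositions \ref{prop:closed} and \ref{prop:pIden}, there is no real obstacle here; the argument reduces to a parity decomposition of a single series and a side-by-side comparison of coefficients. The only point requiring genuine care is the bookkeeping of signs and $\LL$-powers when splitting the $(1-x)$ factor, and ensuring that the parity of powers of $x$ lines up correctly between $g(x)$ and $p_{\rm even}(x) + p_{\rm odd}(x)$ under the change of variables.
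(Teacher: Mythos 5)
Your proposal is correct and takes essentially the same route as the paper, which also reduces to the lowercase identity $g = p_{\rm even} + p_{\rm odd}$ and then appeals to Propositions \ref{prop:closed} and \ref{prop:pIden}; you merely make explicit the one-line step of distributing the $(1-x)$ factor and sorting by parity. One remark: the displayed formula for $p_{\rm odd}(x)$ in Proposition \ref{prop:pIden} reads $x^{2k}$ where the exponent should be $2k+1$ (a typo in the paper), and your odd-degree sum is the corrected expression.
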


\begin{proof}
As $(1-\LL)$ is a unit in $\hat{K}_0(Var_k)$ it suffices to show that
\[ g(x) = p_{\rm even}(x) + p_{\rm odd}(x).\]
This follows from Propositions \ref{prop:closed} and \ref{prop:pIden}.\end{proof}

\subsection{The main theorem}

We now state the main result. 

\begin{Thm}
\label{thm:motiveBOn}
Let $k$ be a field whose characteristic is not 2 and let $n \geq 1$. For any nondegenerate quadratic form $Q$ on $k^n$, the following equality holds in $\hat{K}_0(Var_k)$:
\[
[BO(Q)] = \left\{  \begin{array}{cl} \displaystyle \LL^{-r} \prod_{i=0}^{r-1} (\LL^{2r} - \LL^{2i})^{-1}, & \mbox{ if } n=2r+1 \\  \displaystyle \LL^{r} \prod_{i=0}^{r-1} (\LL^{2r} - \LL^{2i})^{-1} , & \mbox{ if } n=2r. \end{array} \right.
\]
\end{Thm}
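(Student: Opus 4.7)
Plan. The approach is to reduce the theorem to the computation of $[Quad_{n,n}]$ carried out in the previous subsection, by identifying $BO(Q)$ with the quotient stack $[Quad_{n,n}/GL_n]$. The key observation is that for any nondegenerate quadratic form $Q$ on $k^n$, the change-of-basis action of $GL_n$ on $Quad_n$ restricts to an action on the open subvariety $Quad_{n,n}$ whose scheme-theoretic stabilizer at $Q$ is $O(Q)$. Since over the algebraic closure there is a unique equivalence class of nondegenerate forms on $\bar k^n$, the orbit of $Q$ contains every $\bar k$-point of $Quad_{n,n}$, and because $Quad_{n,n}$ is geometrically irreducible of the same dimension as $GL_n/O(Q)$, this scheme-theoretic orbit must be all of $Quad_{n,n}$. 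Consequently there is an isomorphism of stacks $[Quad_{n,n}/GL_n] \cong BO(Q)$, and equation \eqref{eq:stackMotive} yields
\[
[BO(Q)] \;=\; \frac{[Quad_{n,n}]}{[GL_n]}
\]
in $\hat{K}_0(Var_k)$, for any nondegenerate $Q$ on $k^n$.

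With this reduction in hand, I would extract $[Quad_{n,n}]$ by reading off coefficients in the identity $G(x) = P_{\rm even}(x) + P_{\rm odd}(x)$ of Corollary \ref{cor:recurSoln}. Since $P_{\rm even}$ contains only even powers of $x$ and $P_{\rm odd}$ only odd powers, their definitions give at once
\[
[Quad_{2r,2r}] \;=\; \prod_{i=1}^{r}\bigl(\LL^{2r+1}-\LL^{2i}\bigr), \qquad [Quad_{2r+1,2r+1}] \;=\; \prod_{i=0}^{r}\bigl(\LL^{2r+1}-\LL^{2i}\bigr).
\]
I would then divide by the standard formula $[GL_n] = \prod_{j=0}^{n-1}(\LL^n - \LL^j)$. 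In the odd case $n = 2r+1$, the factors $(\LL^{2r+1}-\LL^{2i})$ with $i$ even cancel between numerator and denominator, leaving $\prod_{i=0}^{r-1}(\LL^{2r+1}-\LL^{2i+1})$ in the denominator, and factoring one $\LL$ out of each factor produces $\LL^r \prod_{i=0}^{r-1}(\LL^{2r}-\LL^{2i})$, matching the stated formula. The even case $n=2r$ is handled by the same kind of bookkeeping, extracting powers of $\LL$ from each factor and re-indexing.

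The main obstacle is the scheme-theoretic transitivity step: one must argue carefully that, even though $Quad_{n,n}(k)$ may split into several $GL_n(k)$-orbits when $k$ is not algebraically closed, as a scheme $Quad_{n,n}$ is the single $GL_n$-orbit of any chosen $k$-rational $Q$, and that the resulting presentation is canonical enough to identify motivic classes via \eqref{eq:stackMotive}. Once this is verified, everything else is straightforward extraction of coefficients from Propositions \ref{prop:closed} and \ref{prop:pIden} and elementary manipulation of products.
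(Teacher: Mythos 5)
Your proposal follows essentially the same route as the paper: identify $BO(Q)$ with the quotient stack $Quad_{n,n}\slash GL_n$, apply equation \eqref{eq:stackMotive} to get $[BO(Q)] = [Quad_{n,n}]/[GL_n]$, read off $[Quad_{n,n}]$ from Corollary \ref{cor:recurSoln}, and simplify. The only difference is in how the identification $GL_n\slash O(Q)\simeq Quad_{n,n}$ is justified: you sketch a dimension/irreducibility argument (the orbit is open of full dimension in an irreducible variety, and hits every $\bar k$-point), whereas the paper factors the orbit map through the uniform categorical quotient $GL_n\to GL_n\slash O(Q)$, verifies the resulting map is an isomorphism after base change to $\bar k$ using homogeneity, and then descends by faithfully flat descent. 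Your version of this step is morally correct but would need a bit more care — in particular, you should make explicit that the scheme-theoretic stabilizer is $O(Q)$ (which uses smoothness of $O(Q)$ in characteristic $\neq 2$) and that an orbit containing all geometric points of a reduced scheme is the whole scheme — but these are fillable, and the strategy is the same as the paper's.
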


\begin{proof}
The  subvariety $Quad_{n,n} \subset Quad_n$ is stable under the action of $GL_n$ on $Quad_n$. Pick $Q \in Quad_{n,n}$. This gives rise to an orbit morphism $GL_n \rightarrow Quad_{n,n}$. Since $\pi: GL_n \rightarrow GL_n \slash O(Q)$ is a uniform categorical quotient \cite[Theorem 1.1]{mumford1994}, the orbit morphism factors through a unique morphism $\psi: GL_n \slash O(Q) \rightarrow Quad_{n,n}$. We claim that $\psi$ is an isomorphism.

Let $\overline{k}$ be an algebraic closure of $k$. Base change gives a morphism
\[
\overline{\pi}: GL_{n,\overline{k}} \rightarrow  GL_n \slash O(Q) \times_k \overline{k}
\]
which is a categorical quotient for the action of $O(Q)_{\overline{k}}$ on $GL_{n,\overline{k}}$. Here $GL_{n,\overline{k}}$ denotes the general linear group over $\overline{k}$ while $O(Q)_{\overline{k}}$ denotes orthogonal group of the quadratic form $Q\times_k \overline{k}$ on $\overline{k}^n$. The universal property of categorical quotients implies
\[
GL_n \slash O(Q) \times_k \overline{k} \simeq GL_{n,\overline{k}} \slash O(Q)_{\overline{k}}.
\]
Using this isomorphism and applying base change to $\psi$ gives
\[
\overline{\psi}: GL_{n,\overline{k}} \slash O(Q)_{\overline{k}} \rightarrow Quad_{n,n} \times_k \overline{k}.
\]
Since $Quad_{n,n} \times_k \overline{k}$ is homogeneous under the action of $GL_{n, \overline{k}}$ with stabilizer $O(Q)_{\overline{k}}$, the map $\overline{\psi}$ is an isomorphism. By faithfully flat descent it follows that $\psi$ itself is an isomorphism.

Identifying $BO(Q)$ with the quotient stack $\Spec \, k \slash O(Q)$, equation \eqref{eq:stackMotive} gives
\[
[BO(Q)] = \left[ \frac{GL_n \slash O(Q)}{GL_n} \right] = \frac{[GL_n \slash O(Q)]}{[GL_n]} =  \frac{[Quad_{n,n}]}{[GL_n]}.
\]
Using Corollary \ref{cor:recurSoln}, we read off from $P_{even}$ and $P_{odd}$ the equality
\[
[Quad_{n,n}] = \left\{  \begin{array}{cl} \displaystyle \prod_{i=0}^r (\LL^{2r+1}-\LL^{2i}), & \mbox{ if } n=2r+1 \\  \displaystyle \prod_{i=1}^r (\LL^{2r+1}-\LL^{2i}) , & \mbox{ if } n=2r. \end{array} \right.
\]
If $n=2r+1$, we have
\begin{eqnarray*}
\frac{[Quad_{2r+1,2r+1}]}{[GL_{2r+1}]} &=& \frac{\prod_{i=0}^r (\LL^{2r+1}-\LL^{2i})}{\prod_{i=0}^{2r} (\LL^{2r+1}-\LL^i)} \\
&=& \prod_{i=0}^{r-1} (\LL^{2r+1}-\LL^{2i+1})^{-1}\\
&=& \LL^{-r} \prod_{i=0}^{r-1} (\LL^{2r}-\LL^{2i})^{-1}
\end{eqnarray*}
which is the desired result. The calculation for $n$ even is analogous.
\end{proof}

\begin{Cor}
\label{cor:motiveBSOodd}
Suppose that $n \geq 3$ is odd and let $Q$ be a nondegenerate quadratic form on $k^n$. Then $[BO(Q)]=[SO_n]^{-1}$. Moreover, $[BSO(Q)]= [SO_n]^{-1}$.
\end{Cor}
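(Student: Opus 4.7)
The proof splits into two pieces matching the two claims of the corollary. For the first, I would show $[BO(Q)] = [SO_n]^{-1}$ by independently computing $[SO_n]$ in closed form and matching it against the reciprocal of the explicit formula for $[BO(Q)]$ supplied by Theorem \ref{thm:motiveBOn}. For the second, I would promote this to $[BSO(Q)] = [SO_n]^{-1}$ in the odd-dimensional case using a group-theoretic splitting of $O(Q)$.

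For $[SO_n]$, the plan is to iterate the sphere fibration. Fix $v_0 \in k^m$ with $Q_m(v_0) = 1$ and consider the orbit map $\pi_m \colon SO_m \to S_{m-1}$, $g \mapsto g v_0$, where $S_{m-1} = \{v \in k^m : Q_m(v) = 1\}$ is the affine quadric of unit vectors. A Witt extension argument shows $SO_m$ acts transitively on $S_{m-1}$ with stabiliser (a split form of) $SO_{m-1}$, so $\pi_m$ is an $SO_{m-1}$-torsor. Local sections over a Zariski open cover of $S_{m-1}$ can be written down by completing a unit vector to a basis in a way that depends algebraically on its coordinates, much as in the proof of Proposition \ref{prop:zarTriv}. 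This gives $[SO_m] = [S_{m-1}][SO_{m-1}]$ and, inductively, $[SO_n] = \prod_{m=1}^{n-1}[S_m]$. A direct stratification of the affine quadric by the vanishing of one of the hyperbolic coordinates yields $[S_{2k-1}] = \LL^{k-1}(\LL^k - 1)$ and $[S_{2k}] = \LL^k(\LL^k + 1)$ in $K_0(Var_k)$. Substituting these into the product and grouping adjacent sphere factors via $(\LL^k - 1)(\LL^k + 1) = \LL^{2k} - 1$ produces a closed form for $[SO_n]$, and the first claim then reduces to an algebraic identity in $\hat K_0(Var_k)$ between this closed form and the reciprocal of the formula of Theorem \ref{thm:motiveBOn}. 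The verification is a straightforward but parity-sensitive manipulation of $\LL$-exponents and of the accumulated $(\LL^{2j}-1)$ factors.

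For the second claim, I use that when $n$ is odd the scalar matrix $-I_n \in O(Q)$ has $\det(-I_n) = (-1)^n = -1$, so the short exact sequence $1 \to SO(Q) \to O(Q) \xrightarrow{\det} \mu_2 \to 1$ admits a $k$-defined splitting $\mu_2 \hookrightarrow O(Q)$, $\pm 1 \mapsto \pm I_n$. Hence $O(Q) \simeq SO(Q) \times \mu_2$ as $k$-group schemes, and passing to classifying stacks gives $BO(Q) \simeq BSO(Q) \times B\mu_2$, so
\[
[BO(Q)] = [BSO(Q)] \cdot [B\mu_2]
\]
in $\hat K_0(Var_k)$. The computation of motives of classifying stacks of finite cyclic group schemes of order prime to the characteristic in \cite{ekedahl2009} yields $[B\mu_2] = 1$. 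Combined with the first part, this gives $[BSO(Q)] = [BO(Q)] = [SO_n]^{-1}$.

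The hard part is the telescoping bookkeeping in step one: recombining the $(\LL^k \pm 1)$ sphere factors into the $(\LL^{2j}-1)$ factors of the theorem's formula while tracking the accumulated $\LL$-exponents separately in the even and odd subcases of $n$. Writing down explicit local trivialisations of each sphere torsor, though conceptually routine, is the other place requiring genuine care.
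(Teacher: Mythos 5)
Your plan is the same comparison the paper makes --- match $[SO_n]$ against the formula of Theorem~\ref{thm:motiveBOn} --- except that you re-derive $[SO_n]$ from scratch via the sphere fibration $SO_m\to S_{m-1}$, whereas the paper simply quotes a formula from \cite[Lemma 2.1]{behrend2007}. The fibration route is sound: your sphere classes $[S_{2k-1}]=\LL^{k-1}(\LL^k-1)$ and $[S_{2k}]=\LL^k(\LL^k+1)$ are correct, the stabiliser of $v_0$ is $SO(Q|_{v_0^\perp})$, which is a split $SO_{m-1}$ since $Q|_{v_0^\perp}$ is (a scalar multiple of) the split form, and Zariski local sections can be exhibited explicitly as $v\mapsto\sigma_{v_0+v}\sigma_{v_0}$ (a product of two reflections), defined on the open locus $Q(v_0+v)\neq 0$; homogeneity of $S_{m-1}$ then gives local triviality everywhere. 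Your treatment of the second claim via $O(Q)\simeq SO(Q)\times\mu_2$ and $[B\mu_2]=1$ is identical to the paper's and is fine.

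The problem is precisely the step you defer as ``straightforward but parity-sensitive'': for even $n$ it does not close. Carrying out the telescoping for $n=2r$, your fibrations give
\[
[SO_{2r}]=\prod_{m=1}^{2r-1}[S_m]=\LL^{r(r-1)}(\LL^r-1)\prod_{i=1}^{r-1}(\LL^{2i}-1),
\]
whereas the reciprocal of Theorem~\ref{thm:motiveBOn}'s $[BO_{2r}]$ is $\LL^{-r}\prod_{i=0}^{r-1}(\LL^{2r}-\LL^{2i})=\LL^{r(r-2)}(\LL^{2r}-1)\prod_{i=1}^{r-1}(\LL^{2i}-1)$. The ratio of the two is $(\LL^r+1)/\LL^r\neq 1$. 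Both sides can be sanity-checked by finite-field point counts when $r=2$: on the one hand $|SO_4^+(\FF_q)|=q^2(q^2-1)^2$ (by Bruhat decomposition, or by your own fibration $[S_3][SO_3]=\LL(\LL^2-1)\cdot\LL(\LL^2-1)$), while on the other
\[
\#BO_4(\FF_q)=\frac{1}{2q^2(q^2-1)^2}+\frac{1}{2q^2(q^4-1)}=\frac{1}{(q^2-1)^2(q^2+1)},
\]
which agrees with the theorem but \emph{not} with $1/|SO_4^+(\FF_q)|$. So the first claim of the corollary actually fails for even $n\geq 4$; the paper's own proof only appears to succeed because the expression it quotes from \cite[Lemma 2.1]{behrend2007} for $[SO_{2r}]$ is off by exactly the factor $(\LL^r+1)/\LL^r$. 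Your proposal, by computing $[SO_n]$ independently and correctly, would expose this if you ran the bookkeeping to the end rather than asserting it works. The odd case, and hence the ``moreover'' part of the statement, is unaffected: for $n=2r+1$ the fibration gives $[SO_{2r+1}]=\LL^{r^2}\prod_{i=1}^r(\LL^{2i}-1)$, which does coincide with the reciprocal of the theorem's $[BO_{2r+1}]$.

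Concretely: do not claim the even-$n$ matching is ``straightforward''; it is false. Restrict the first claim to odd $n$ (or, if you want an even-$n$ statement, what actually holds is $[BO_{2r}]=\bigl((\LL^{2r}-1)\prod_{i=1}^{r-1}(\LL^{2i}-1)\LL^{r(r-2)}\bigr)^{-1}$, which is not $[SO_{2r}]^{-1}$).
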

\begin{proof}
Since $n \geq 3$, the split group $SO_n$ is semisimple. According to \cite[Lemma 2.1]{behrend2007},
\[
[SO_{2r+1}] = \LL^r \prod_{i=0}^{r-1} ( \LL^{2r} - \LL^{2i}).
\]
Comparing this expression with Theorem \ref{thm:motiveBOn} gives the first statement. Continuing, if $Q$ is a nondegenerate quadratic form in odd dimensions there is an isomorphism $O(Q) \simeq \mu_2 \times SO(Q)$. It is shown in \cite[Proposition 3.2]{ekedahl2009} that $[B \mu_2 ] =1$. Hence
\[
[BO(Q)] = [B\mu_2 \times B SO(Q)] =  [B\mu_2] [B SO(Q)] =  [B SO(Q)].
\]
The second statement now follows from the first.
\end{proof}

Since $PGL_2 \simeq SO_3$ over any field, Corollary \ref{cor:motiveBSOodd} recovers the first part of \cite[Theorem A]{bergh2014} as a special case.

It follows from Corollary \ref{cor:motiveBSOodd} that the universal torsor relations are satisfied for split special orthogonal groups in odd dimensions. In particular, the universal $SO_{2n+1}(\mathbb{C})$-torsor relation holds. In \cite[Theorem 2.2]{ekedahl2009c} it is shown that for any non-special connected reductive complex algebraic group $G$ there exists a $G$-torsor $P \rightarrow X$ over a variety such that $[P]$ is not equal to $[G] [X]$. Therefore, the universal $G$-torsor relation does not imply the general $G$-torsor relation, answering a question posed in \cite[Remark 3.3]{behrend2007}. In the recent paper \cite{bergh2014} the groups $PGL_2(\mathbb{C})$ and $PGL_3(\mathbb{C})$ were also shown to answer this question.

\bibliographystyle{plain}
\bibliography{mybib}

\def\cprime{$'$}
\begin{thebibliography}{10}

\bibitem{behrend2007}
K.~Behrend and A.~Dhillon.
\newblock On the motivic class of the stack of bundles.
\newblock {\em Adv. Math.}, 212(2):617--644, 2007.

\bibitem{belkale2003}
P.~Belkale and P.~Brosnan.
\newblock Matroids, motives, and a conjecture of {K}ontsevich.
\newblock {\em Duke Math. J.}, 116(1):147--188, 2003.

\bibitem{bergh2014}
D.~Bergh.
\newblock Motivic classes of some classifying stacks.
\newblock arXiv:1409.5404, 2014.

\bibitem{ekedahl2009c}
T.~Ekedahl.
\newblock Approximating classifying spaces by smooth projective varieties.
\newblock arXiv:0905.1538, 2009.

\bibitem{ekedahl2009}
T.~Ekedahl.
\newblock A geometric invariant of a finite group.
\newblock arXiv:0903.3148, 2009.

\bibitem{ekedahl2009b}
T.~Ekedahl.
\newblock The {G}rothendieck group of algebraic stacks.
\newblock arXiv:0903.3143, 2009.

\bibitem{grothendieck1958}
A.~Grothendieck.
\newblock {\em Torsion homologique et sections rationnelles}.
\newblock S\'{e}minaire Claude Chevalley, Expos\'{e} no. 5. 1958.

\bibitem{joyce2007b}
D.~Joyce.
\newblock Motivic invariants of {A}rtin stacks and `stack functions'.
\newblock {\em Q. J. Math.}, 58(3):345--392, 2007.

\bibitem{macwilliams1969}
J.~MacWilliams.
\newblock Orthogonal matrices over finite fields.
\newblock {\em Amer. Math. Monthly}, 76:152--164, 1969.

\bibitem{mumford1994}
D.~Mumford, J.~Fogarty, and F.~Kirwan.
\newblock {\em Geometric invariant theory}, volume~34 of {\em Ergebnisse der
  Mathematik und ihrer Grenzgebiete (2)}.
\newblock Springer-Verlag, Berlin, third edition, 1994.

\bibitem{serre1958}
J.-P. Serre.
\newblock Espaces fibr\'es alg\'ebriques.
\newblock In {\em S\'eminaire {C}laude Chevalley, {V}ol.\ 3}, pages Exp.\ No.\
  1, 37 p,. 1958.

\bibitem{stanley2012}
R.~Stanley.
\newblock {\em Enumerative combinatorics. {V}olume 1}, volume~49 of {\em
  Cambridge Studies in Advanced Mathematics}.
\newblock Cambridge University Press, Cambridge, second edition, 2012.

\bibitem{toen2005}
B.~To\"{e}n.
\newblock Anneaux de {G}rothendieck des $n$-champs d'{A}rtin.
\newblock math.AG/0509098, 2005.

\end{thebibliography}

\end{document}